\newcommand{\N}{\mathbb{N}}
\newcommand{\Z}{\mathbb{Z}}
\newtheorem{corollary}{Corollary}
\newtheorem{result}{Proposition}
\newtheorem{example}{Example}
\newtheorem{Remark}{Remark}
\theoremstyle{definition}
\newtheorem{remark}{Remark}
\numberwithin{equation}{subsection}
\begin{document}
	\title[On Equicontinuity and Related Notions in Nonautonomous Dynamical Systems]{On Equicontinuity and Related Notions in Nonautonomous Dynamical Systems}%
	\author{Sushmita Yadav and Puneet Sharma}
	\address{Department of Mathematics, I.I.T. Jodhpur, NH 65, Nagaur Road, Karwar, Jodhpur-342030, INDIA}%
	\email{yadav.34@iitj.ac.in, puneet@iitj.ac.in}%
	
	
	\subjclass{37B20, 37B55, 37C35}
	
	\keywords{nonautonomous dynamical system, equicontinuity, minimal system, almost periodic point}

	
	\maketitle
\section*{Abstract}
In this work, we investigate the dynamics of a general non-autonomous system generated by a commutative family of homeomorphisms. In particular, we investigate properties such as periodicity, equicontinuity, minimality and transitivity for a general non-autonomous dynamical system. In \cite{sk2}, the authors derive necessary and sufficient conditions for a system to be minimal. We claim the result to be false and provide an example in support of our claim. Further, we correct the result to derive necessary and sufficient conditions for a non-autonomous system to be minimal. We prove that for an equicontinuous flow generated by a commutative family, while the system need not exhibit almost periodic points, if $x$ is almost periodic then every point in $\overline{\mathcal{O}_H(x)}$ is almost periodic. We further prove that in such a case, the set $\overline{\mathcal{O}_H(x)}$ is uniformly almost periodic and hence provide an analogous extension to a result known for the autonomous systems. We prove that a system generated by a commutative family is transitive if and only if it exhibits a point with dense orbit. We also prove that any minimal system generated by commutative family is either equicontinuous or has a dense set of sensitive points.

\section*{Introduction}

Dynamical systems have been long used to investigate various natural and physical processes around us. While mathematical investigations have enriched the literature with qualitative results determining long term behavior of such systems, the field has also found a variety of applications in areas such as complex systems, control theory, biomechanics and cognitive sciences \cite{beer,Hamill,kohmoto}.  Although the theory has been used extensively in various fields, most of problems have been approximated using autonomous systems (systems with time invariant governing rule). However, as governing rule varies with time in many natual processes around us, better approximations can be obtained by allowing the governing rule to be time variant. While some investigations for such a setting in the discrete case have been made, many questions for such a setting are still unanswered. In \cite{sk1}, the authors investigate the topological entropy when the family $\mathbb{F}$ is equicontinuous or uniformly convergent. In \cite{sk2}, ther authors discusses minimality conditions for a non-autonomous system on a compact Hausdorff space while focussing on the case when the non-autonomous system is defined on a compact interval of the real line. In \cite{pm}, the authors investigate a non-autonomous system generated by a finite family of continuous self maps. In the process, they study properties such as transitivity, weak mixing, topological mixing, existence of periodic points, various forms of sensitivities and Li-Yorke chaos. In \cite{jd}, the authors establish that if $f_n\rightarrow f$, there is no relation between chaotic behavior of the non-autonomous system generated by $f_n$ and the chaotic behavior of $f$. Before we move further, we give some of the basic concepts and definitions required.\\

Let $(X,d)$ be a compact metric space and let $\mathbb{F}=\{f_n: n \in \mathbb{N}\}$ be a family of homeomorphisms on $X$. For any given initial state of the system $x_0$, any such family generates a \emph{non-autonomous} dynamical system via the relation $x_{n} = \left\{
	\begin{array}{lr}
		{f_n(x_{n-1})} &  : n\geq 1, \\
		{f_n^{-1}(x_{n+1})} & : n<0. \\
	\end{array}
	\right.$ In other words, the non-autonomous system generated by the family $\mathbb{F}$ can be visualized as orbit of $x_0$ under the ordered set $\{\ldots,f_2^{-1},f_1^{-1},I,f_1,f_2,\ldots,\}$. For a given initial state $x_0$ of the system, let $\omega_n(x_0)$ denote the state of the system at time $n$. The set $\mathcal{O}(x)=\{\omega_n(x): n\in \mathbb{Z}\}$ is called the \emph{orbit} of any point $x$ in $X$. Further, we refer to the set $\mathcal{O}_H(x)=\{(\omega_{k_n}\circ\omega_{k_{n-1}}\circ\ldots\circ\omega_{k_1})(x): k_i\in \Z, n\in \N\}$ as the orbital hull of the point $x$. Let $\mathcal{O}_H^k(x)=\{\omega_{r_n}\omega_{r_{n-1}}...\omega_{r_2}\omega_{r_1}(x) : n\in \N, r_i\in\{-k,-k+1,\ldots,1,2,...,k\}\}$ denote the truncation (of order $k$) of the orbitall hull of the point $x$. It may be noted that orbital hull of a point $x$ is the smallest invariant set containing $x$. \\

A point $x\in X$ is said to be \emph{periodic} of period $n\in \N$ if $\omega_{nk}(x)=x$ $\forall k \in \Z$.  A point $x\in X$ is called \emph{almost periodic} if for any $\epsilon>0$, the set $\{n\in \Z: d(\omega_n(x),x)<\epsilon\}$ is syndetic. If every point $x\in X$ is \emph{almost periodic} then $(X,\mathbb{F})$ is said to be \emph{pointwise almost periodic}. $(X,\mathbb{F})$ is said to be \emph{uniformly almost periodic} if for every $\epsilon>0$ there exists $M>0$ such that the set $\{n\in Z : d(\omega_n(x),x)<\epsilon \}$ is $M$-syndetic for all $x\in X$. A set $Y\subseteq X$ is said to be \emph{invariant} if $\omega_k(Y)\subseteq Y$, for all $k\in \Z$. We say $(Y,\mathbb{F})$ to be a \emph{minimal subsystem} of $(X,\mathbb{F})$ if it is a non-empty, closed, invariant subsystem of $(X,\mathbb{F})$ with no proper non-empty subset having these properties. A system $(X,\mathbb{F})$ is said to be \emph{equicontinuous} if for each $\epsilon>0$, there exists $\delta>0$ such that $d(x, y) < \delta$ implies $d(\omega_n(x), \omega_n(y)) < \epsilon$ for all $n\in \Z$, $x, y \in X$. A pair $(x, y)$ is \emph{proximal} for $(X,\mathbb{F})$  if $\liminf\limits_n ~~d(\omega_n(x), \omega_n(y)) = 0$. Let $P(X)$ denote the set of proximal pairs of system $(X,\mathbb{F})$, then $(X,\mathbb{F})$ is said to be \emph{distal} if $P(X)=\Delta$, where $\Delta$ denotes the diagonal in the space $X\times X$. A system $(X,\mathbb{F})$ is said to be \emph{point transitive} if there exists a point $x\in X$ such that $\overline{\mathcal{O}(x)}=X$. In this case, the point $x$ is referred as a \emph{transitive point}. The system is said to be \emph{topologically transitive} if for every pair of non-empty open sets $U, V$ in $X$, there exists $k\in \Z$ such that $\omega_k(U)\cap V \neq \emptyset$. The system $(X,\mathbb{F})$ is called $r$-transitive if the system generated by the family $\mathbb{F}_r=\{f_{(k+1)r}\circ f_{(k+1)r-1}\circ\ldots\circ f_{kr+1}:k\in\N\}$ is transitive. A system $(X,\mathbb{F})$ is totally transitive if it is $r$-transitive for all $r\in\mathbb{N}$. A system $(X,\mathbb{F})$ is said to be \emph{sensitive} at a point $x$ if there exists $\delta_x>0$ such that for each neighborhood $U_x$ of $x$ there exists $k\in\Z$ such that $diam(\omega_k(U_x))>\delta_x$. A system $(X,\mathbb{F})$ is said to be \emph{sensitive} if there exists $\delta>0$ such that for each $x\in X$ and each neighborhood $U$ of $x$ there exists $k\in\Z$ such that $diam(\omega_k(U))>\delta$. It may be noted that in case the $f_n$'s coincide, the above definitions coincide with the known notions of an autonomous dynamical system \cite{bc,bs,de}. Some basic concepts and recent works in this area can be found in literature \cite{kol,of,jd,sk1,sk2,pm}.\\

In this paper, we investigate properties such as periodicity, equicontinuity, minimality and transitivity for a non-autonomous system generated by a commutative family of homeomorphisms. We prove that every point in the orbital hull of periodic point is periodic. We give example to show that a pair of periodic points may form a Li-Yorke pair and hence need not exhibit simple dynamical behavior. In \cite{sk2}, the authors claim that  system is minimal if and only if orbit of each point is dense in $X$ (page $84$, line $-7$). Also, the authors claim that a non-autonomous system is minimal if and only if for non-empty every open set $U$ in $X$, there exists $k\in\N$ such that trajectory of every point meets $U$ in at most $k$ iterations (Lemma $2.2$).  We establish both the claims to be false. While we provide an example of a minimal system void of any points with dense orbit, we correct the result to derive necessary and sufficient conditions for a non-autonomous system to be minimal. We prove that a non-autonomous system is minimal if and only if for non-empty every open set $U$ in $X$, there exists $k\in\N$ such that $\mathcal{O}_H^k(x)$ meets $U$ for every point $x\in X$. We prove that for equicontinuous systems, if $x$ is almost periodic then every point in $\overline{\mathcal{O}_H(x)}$ is almost periodic. In such a setting, we establish $\overline{\mathcal{O}_H(x)}$ to be uniformly almost periodic. We prove that a system is transitive if and only if it exhibits points with dense orbit. We prove that while minimal systems need not be transitive, an equicontinuous transitive system is necessarily minimal. We also prove that a minimal system is either equicontinuous or has a dense set of points of sensitivity.

\section*{Main Results}

\begin{result}
For any system $(X,\mathbb{F})$ generated by a commutative family of homeomorphisms, $x$ is periodic for $(X,\mathbb{F}) \implies$ each point of $\overline{\mathcal{O}_H(x)}$ is periodic (with same period).
\end{result}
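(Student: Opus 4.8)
The plan is to exploit commutativity to transport the defining periodicity relation of $x$ onto every point of its orbital hull, and then to observe that ``being periodic with period $p$'' is a closed condition, so that nothing is lost when we pass to the closure.

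First I would record the structural consequence of commutativity. By definition each map $\omega_n$ is a composition of the homeomorphisms $f_1,\ldots,f_{|n|}$ (or their inverses when $n<0$). Since $\mathbb{F}$ is commutative, the maps $f_i^{\pm 1}$ all commute with one another, and hence any two of the maps $\omega_m,\omega_n$ commute, i.e. $\omega_m\circ\omega_n=\omega_n\circ\omega_m$ for all $m,n\in\Z$. In particular every word $\omega_{k_r}\circ\cdots\circ\omega_{k_1}$ appearing in the definition of $\mathcal{O}_H(x)$ commutes with each $\omega_{pk}$.

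Next, let $p$ be the period of $x$, so that $\omega_{pk}(x)=x$ for every $k\in\Z$, and fix an arbitrary point $y=\omega_{k_r}\circ\cdots\circ\omega_{k_1}(x)\in\mathcal{O}_H(x)$. Using the commutation just noted, I would slide $\omega_{pk}$ to the right past the word defining $y$:
\begin{align*}
\omega_{pk}(y)&=\omega_{pk}\circ\omega_{k_r}\circ\cdots\circ\omega_{k_1}(x)\\
&=\omega_{k_r}\circ\cdots\circ\omega_{k_1}\circ\omega_{pk}(x)=\omega_{k_r}\circ\cdots\circ\omega_{k_1}(x)=y,
\end{align*}
where the last step uses $\omega_{pk}(x)=x$. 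As $k\in\Z$ was arbitrary, $y$ is periodic with the same period $p$, so every point of $\mathcal{O}_H(x)$ is periodic of period $p$.

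Finally I would pass to the closure. For each $k\in\Z$ the fixed-point set $\{y\in X:\omega_{pk}(y)=y\}$ is closed, being the zero set of the continuous function $y\mapsto d(\omega_{pk}(y),y)$; hence the set of period-$p$ points, $\bigcap_{k\in\Z}\{y:\omega_{pk}(y)=y\}$, is closed. Since $\mathcal{O}_H(x)$ lies inside this closed set by the previous step, so does $\overline{\mathcal{O}_H(x)}$, which is exactly the claim. The argument is short, and its single genuine ingredient is the commutation step; I expect the only point to watch is the bookkeeping that each $\omega_n$ really is a word in the commuting generators $f_i^{\pm 1}$, so that the rearrangement is legitimate. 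This is precisely where the commutativity hypothesis is indispensable, and the statement is not expected to survive without it.
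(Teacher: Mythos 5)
Your proof is correct and follows essentially the same route as the paper's: you commute $\omega_{pk}$ past the word $\omega_{k_r}\circ\cdots\circ\omega_{k_1}$ defining a hull point to see it is periodic of the same period, and then pass to the closure because period-$p$ periodicity is a closed condition. The only difference is cosmetic: the paper simply asserts that a limit of period-$k$ points has period $k$, whereas you justify it by exhibiting the period-$p$ set as $\bigcap_{k\in\Z}\{y: \omega_{pk}(y)=y\}$, an intersection of closed fixed-point sets.
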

	
\begin{proof}
Let $(X,\mathbb{F})$ be generated by a commutative family of homeomorphisms and let $x$ be periodic for $(X,\mathbb{F})$ (with period r). Then, $\omega_{nr}(x)=x~~$ for all $n\in\mathbb{Z}$. For any point $\omega_{r_n}\circ\omega_{r_{n-1}}\circ\ldots\omega_{r_1}(x)$ in orbital hull of $x$, as $\omega_{nr}(\omega_{r_n}\circ\omega_{r_{n-1}}\circ\ldots\omega_{r_1}(x))=\omega_{r_n}\circ\omega_{r_{n-1}}\circ\ldots\omega_{r_1}(\omega_{nr}(x))=\omega_{r_n}\circ\omega_{r_{n-1}}\circ\ldots\omega_{r_1}(x)$ (as $\mathbb{F}$ is commutative), every point in $\mathcal{O}_H(x)$ is periodic (with same period). Finally, as the limit of periodic points of period $k$ is a periodic point of period $k$, each point of $\overline{\mathcal{O}_H(x)}$ is periodic point of period $k$ and the proof is complete. 	
\end{proof}

\begin{Remark}
The above result establishes the periodicity of the elements of the closure of the orbital hull of a point $x$, when the point $x$ is itself periodic. It may be noted that as the governing rule is time variant, the periodicity of $x$ need not guarantee the periodicity of the members of the orbital hull (or even orbit itself). Also, if the governing rule is time variant, a periodic point may have infinite orbit and hence need not attribute to simpler dynamical behavior. In fact, if the governing rule is time variant, a pair of periodic points may behave in an unexpected manner and form a Li-Yorke pair. We now give examples in support of our claim.
\end{Remark}
	
\begin{example}
Let $X=[0,1]$ be the unit interval and let $f_n:X\rightarrow X$ be defined as $f_{2n-1}(x) = \left\{
		\begin{array}{lr}
			\frac{x}{2} &  : 0\leq x \leq \frac{1}{2}, \\
			{\frac{3}{2}x-\frac{1}{2}} & : \frac{1}{2}\leq x\leq 1 \\
		\end{array}
		\right\}. ~~~    where~ k\in \N$ and $f_{2n}(x)=1-\sqrt{x}$ for all $n\in\mathbb{N}$.
		
Then, $\frac{1}{2}\in X$ is a periodic point of period 2, but $f_1(\frac{1}{2})=\frac{1}{4}$ is not periodic for $(X,\mathbb{F})$. Thus, periodicity need not be preserved in the elements of the orbital hull when the generating maps do not commute.
\end{example}

\begin{example}
Let $X=[0,1]$ and define $f_n:X\rightarrow X$ such that $f_{2n-1}(x)=x^{2n}$ and $f_{2n}(x)=x^{\frac{1}{2n}}$ for $n\in \N$. Then, every point is periodic (with period $2$). Also as $x^n$ converges to $0$ for all $x$ in $(0,1)$, the pair $(x,y)$ forms a Li-Yorke pair for all $x,y\in (0,1)$. Consequently, periodic points may form a Li-Yorke pair and hence need not attribute to simpler dynamical behavior.
\end{example}
	
\begin{result}
For any system $(X,\mathbb{F})$, $(X,\mathbb{F})$ is minimal if and only if $\overline{\mathcal{O}_H(x)}=X$ for all $x\in X$.
\end{result}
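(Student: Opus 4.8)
The plan is to prove the equivalence by unpacking the definition of minimality in terms of the orbital hull, which the paper has already identified as ``the smallest invariant set containing $x$.'' The key structural fact to exploit is that $\overline{\mathcal{O}_H(x)}$ is precisely the smallest \emph{closed} invariant set containing $x$; once this is established, both directions of the equivalence follow almost formally from what it means for the whole space to be minimal.

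First I would verify the two auxiliary facts about $\overline{\mathcal{O}_H(x)}$. The set $\mathcal{O}_H(x)$ is invariant by construction (applying any $\omega_k$ to a finite composition $\omega_{r_n}\circ\cdots\circ\omega_{r_1}(x)$ yields another such composition), so I would check that its closure remains invariant, which uses continuity of each $\omega_k$ together with the standard fact that $\omega_k(\overline{A}) \subseteq \overline{\omega_k(A)}$ for continuous maps on a compact metric space. Combined with the observation that any closed invariant set containing $x$ must contain all of $\mathcal{O}_H(x)$ and hence its closure, this shows $\overline{\mathcal{O}_H(x)}$ is the minimal closed invariant set through $x$.

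For the forward direction, suppose $(X,\mathbb{F})$ is minimal. Fix any $x \in X$. Then $\overline{\mathcal{O}_H(x)}$ is a non-empty, closed, invariant subset of $X$, so by minimality it cannot be a proper subset; hence $\overline{\mathcal{O}_H(x)} = X$. For the converse, suppose $\overline{\mathcal{O}_H(x)} = X$ for every $x \in X$, and let $Y$ be any non-empty, closed, invariant subset of $X$. Choosing any point $y \in Y$, invariance and closedness of $Y$ force $\overline{\mathcal{O}_H(y)} \subseteq Y$; but the hypothesis gives $\overline{\mathcal{O}_H(y)} = X$, so $Y = X$. This shows $X$ admits no proper non-empty closed invariant subset, i.e.\ the system is minimal.

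The main obstacle I anticipate is not in the logical skeleton above, which is routine, but in the invariance of the closure: one must be careful that invariance was defined via $\omega_k(Y) \subseteq Y$ for all $k \in \Z$, and verifying $\omega_k(\overline{\mathcal{O}_H(x)}) \subseteq \overline{\mathcal{O}_H(x)}$ requires both the continuity of the homeomorphisms $\omega_k$ and the precise closure-under-composition structure of the orbital hull. A subtle point worth flagging is that, unlike the autonomous case, one should not conflate the orbit $\mathcal{O}(x)$ with the orbital hull $\mathcal{O}_H(x)$; the whole correction to the erroneous result in \cite{sk2} hinges on using $\overline{\mathcal{O}_H(x)}$ rather than $\overline{\mathcal{O}(x)}$, so I would emphasize that minimality is governed by the orbital hull precisely because it, and not the orbit, is the smallest invariant set containing $x$.
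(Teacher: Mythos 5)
Your proposal is correct and follows the same route as the paper, which compresses the entire argument into the single observation that $\mathcal{O}_H(x)$ is the smallest invariant set containing $x$; you have simply made explicit the supporting details (invariance of the closure via continuity of each $\omega_k$, and the two formal directions) that the paper leaves implicit. No gaps.
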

	
\begin{proof}
As orbital hull of $x$ is smallest invariant subset of $X$ containing $x$, $(X,\mathbb{F})$ is minimal if and only if $\overline{\mathcal{O}_H(x)}=X$ for all $x\in X$.
\end{proof}
	
\begin{Remark}
The above result provides necessary and sufficient criteria for a non-autonomous system to be minimal. It may be noted that $X$ is itself invariant for the system generated by the family $\mathbb{F}$, a simple application of Zorn's lemma establishes the existence of minimal sets for non-autonomous systems. In \cite{sk2}, the authors claim that  system is minimal if and only if orbit of each point is dense in $X$ (page $84$, line $-7$). Further, the authors claim that a non-autonomous system is minimal if and only if for non-empty every open set $U$ in $X$, there exists $k\in\N$ such that trajectory of every point meets $U$ in at most $k$ iterations (Lemma $2.2$).  However, we claim that both of the observations fail to hold. It may be noted that as minimality in non-autonomous systems is equivalent to orbital hull of each point being dense in $X$, minimality of a system does not guarantee orbit of each point to be dense in $X$ (Example \ref{Example 2}). Also, as orbit of a point is a non-invariant set, the second assertion also fails to hold good (Example \ref{Example 2}).  In fact, as orbital hull of $x$ is the smallest invariant set containing $x$, a non-autonomous system is minimal if and only if for non-empty every open set $U$ in $X$, there exists $k\in\N$ such that $\mathcal{O}_H^k(x)$ meets $U$ for every point $x\in X$ (Proposition \ref{mo}). Also, while minimality of a system ensures each of its points to be almost periodic in the autonomous case, non-invariance of the governing rule forces such an implication not to hold true for non-autonomous systems. The proof follows from the fact that if the governing rule varies with time, any initial point $x_0$ may fail to return to its neighborhood even in the absence of proper invariant sets. In fact contrary to the autonomous case, a minimal set in a non-autonomous system may contain periodic points in the non-trivial sense (periodic points whose orbit is proper in the minimal set).  We now give examples in support of our claim.
\end{Remark}
	
\begin{example}\label{Example 2}
		Let $X=\mathbb{S}^1$ be the unit circle and let $f_1(\theta)=\theta+\frac{1}{2}$, $f_2(\theta)=\theta-\frac{1}{2^2}$. For $n\geq 3$, define $f_n(\theta) = \left\{
		\begin{array}{lr}
			{\theta+\frac{1}{2^{k}}} &  :n=2k+1, \\
			{\theta-\frac{1}{2^k}-\frac{1}{2^{k+1}}} & : n=2k. \\
		\end{array}
		\right. ~~~    where~ k\in \N$
		
		As closure of the orbital hull of each $x$ is $X$, the non-autonomous system generated by the family $(f_n)$ is minimal. However as each point settles at the diametrically opposite end (of the initial point $x_0$), none of the points in the system are almost periodic. Consequently, almost periodic points are not guaranteed to exist in the non-autonomous setting.
\end{example}

\begin{example}\label{ex4}
	Define $f_n: \mathbb{S}^1\rightarrow \mathbb{S}^1$ as follows:
	$$f_n(\theta) = \left\{
	\begin{array}{lr}
		{\theta+\frac{1}{2^{k}}} &  :n=4k ~or~ 4k-3, \\
		{\theta-\frac{1}{2^k}} & : n=4k-1 ~or~ 4k-2. \\		
	\end{array}
	\right. ~~~    where~ k\in \N$$
	It is clear that every element in $\mathbb{S}^1$ is periodic with period 2. However as orbital hull of every point in dense in $S^1$, the system $(X,\mathbb{F})$ is minimal and contains periodic points in the non-trivial sense.
\end{example}

\begin{result}\label{mo}
Any system $(X,\mathbb{F})$ is minimal if and only if for every non-empty open set $U$ in $X$ there exists $k \in \N$ such that set $\mathcal{O}_H^k(x)\cap U\neq \emptyset$ for all $x\in X$.
\end{result}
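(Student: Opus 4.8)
The plan is to reduce both directions to the characterization proved in the preceding Proposition, namely that $(X,\mathbb{F})$ is minimal if and only if $\overline{\mathcal{O}_H(x)}=X$ for every $x\in X$. The converse implication is the easy one: assume that for every non-empty open $U$ there is some $k\in\N$ with $\mathcal{O}_H^k(x)\cap U\neq\emptyset$ for all $x$. Since $\mathcal{O}_H^k(x)\subseteq\mathcal{O}_H(x)$, this forces $\mathcal{O}_H(x)\cap U\neq\emptyset$ for every $x$ and every non-empty open $U$, that is, $\mathcal{O}_H(x)$ is dense and $\overline{\mathcal{O}_H(x)}=X$ for all $x$; minimality then follows from the cited characterization.

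For the forward implication I would argue by compactness. Fix a non-empty open set $U$ and, for each $k\in\N$, set
\[
V_k=\{x\in X:\mathcal{O}_H^k(x)\cap U\neq\emptyset\}.
\]
The goal is to produce a single $k$ with $V_k=X$. First I would observe that each $V_k$ is open: for a fixed finite word $(r_1,\dots,r_n)$ with $r_i\in\{-k,\dots,k\}$ the map $x\mapsto\omega_{r_n}\circ\cdots\circ\omega_{r_1}(x)$ is a homeomorphism, so the set $\{x:\omega_{r_n}\circ\cdots\circ\omega_{r_1}(x)\in U\}$ is open, and $V_k$ is the union of these open sets over all admissible words. Next, since the admissible index blocks are nested, $\mathcal{O}_H^k(x)\subseteq\mathcal{O}_H^{k+1}(x)$, and hence $V_k\subseteq V_{k+1}$. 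Finally, minimality gives $\overline{\mathcal{O}_H(x)}=X$, so $\mathcal{O}_H(x)$ meets $U$; as $\mathcal{O}_H(x)=\bigcup_{k}\mathcal{O}_H^k(x)$, each $x$ lies in some $V_k$, whence $\{V_k\}_{k\in\N}$ is an open cover of the compact space $X$. A finite subcover, together with the nesting $V_k\subseteq V_{k+1}$, collapses to $V_K=X$ for $K$ the largest index, which is precisely the required uniform $k$.

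I expect the only genuine obstacle to be the openness of $V_k$ combined with the correct use of compactness: one must recognise that $V_k$ is a union of preimages of $U$ under the continuous finite compositions $\omega_{r_n}\circ\cdots\circ\omega_{r_1}$, rather than a preimage of $U$ under a single map, and that it is the nesting of the $V_k$ that turns the finite subcover into a single exhausting set $V_K$. The remaining steps are direct applications of the density characterization of minimality and present no difficulty.
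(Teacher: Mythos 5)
Your proof is correct, but it takes the dual route to the paper's. The paper proves the forward direction by contradiction via sequential compactness: assuming no uniform $k$ exists, it picks $x_k$ with $\mathcal{O}_H^k(x_k)\cap U=\emptyset$, passes to a limit point $x$ of $(x_k)$, and uses continuity of each fixed finite composition $\omega_{r_m}\circ\cdots\circ\omega_{r_1}$ (whose indices are bounded by some $p$, so avoidance of $U$ holds along the tail $n\geq p$) to conclude $\mathcal{O}_H(x)\cap U=\emptyset$, contradicting minimality. You instead argue directly with the open-cover form of compactness: the sets $V_k=\{x: \mathcal{O}_H^k(x)\cap U\neq\emptyset\}$ are open (unions of preimages of $U$ under the continuous finite compositions), nested, and cover $X$ by the density characterization of minimality, so a finite subcover collapses to a single $V_K=X$. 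The two arguments are logically dual instantiations of the same compactness principle --- a point $x\notin V_k$ is precisely the paper's witness $x_k$ --- but yours avoids contradiction and subsequence bookkeeping, and it isolates the openness of $V_k$ as the one step needing care, something the paper leaves implicit inside its continuity-along-a-subsequence step; the paper's version, in exchange, never needs to introduce the sets $V_k$ at all. Your converse direction is identical to the paper's. One cosmetic remark: continuity of the compositions already gives openness of $V_k$; that they are homeomorphisms is not needed.
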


\begin{proof}
Let $(X,\mathbb{F})$ be minimal and let $U$ be a non-empty open set in $X$. Firstly, note that if the claim does not hold then for each $k\in\mathbb{N}$ there exists $x_k$ such that $\mathcal{O}_H^k(x_k)\cap U=\emptyset$. Let $x$ be a limit point of $(x_k)$ and let $(r_1,r_2,\ldots,r_m)$ be any tuple (of any fixed length $m$). Let $p=\max \{r_i:i=1,2,\ldots,m\}$. As $\mathcal{O}_H^k(x_k)\cap U=\emptyset$, we have $(\omega_{r_m}\circ\omega_{r_{m-1}}\circ\ldots\circ\omega_{r_1})(x_n)\notin U$ for all $n\geq p$ and hence $\omega_{r_m}\circ\omega_{r_{m-1}}\circ\ldots\circ\omega_{r_1}(x)\notin U$. As the argument holds for any tuple $(r_1,r_2,\ldots,r_m)$, we have $\mathcal{O}_H(x)\cap U=\emptyset$ (which contradicts minimality of $X$). Consequently, there exists $k \in \N$ such that set $\mathcal{O}_H^k(x)\cap U\neq \emptyset$ for all $x\in X$ and the proof of forward part is complete.\\

Conversely, if there exists $k \in \N$ such that set $\mathcal{O}_H^k(x)\cap U\neq \emptyset$ for all $x\in X$, then orbital hull of any point $x$ intersects every non-empty open set $U$ and hence $X$ is minimal.
\end{proof}

\begin{result}
For any system $(X,\mathbb{F})$ generated by a commutative family of homeomorphisms, $(X,\mathbb{F})$ is equicontinuous $\implies$ $(X,\mathbb{F})$ is distal.
\end{result}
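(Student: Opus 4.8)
The plan is to argue by contradiction, exploiting the feature that for a \emph{commutative} family the negative-time evolution maps are precisely the inverses of the positive-time maps. First I would record the structural identity $\omega_{-n}\circ\omega_n=\mathrm{id}_X$ for every $n\in\Z$. For $n\geq 1$ one has $\omega_n=f_n\circ\cdots\circ f_1$ while the backward orbit is given by $\omega_{-n}=f_n^{-1}\circ\cdots\circ f_1^{-1}$; since the $f_i$ pairwise commute, so do the $f_i^{-1}$, and all of these factors may be freely rearranged, so the product $\omega_{-n}\circ\omega_n$ collapses to the identity. Equivalently, $\omega_{-n}=\omega_n^{-1}$. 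This is the one place the commutativity hypothesis enters, and it is exactly what fails in the general non-autonomous setting; it is also what converts the equicontinuity assumption, imposed over all of $\Z$, into genuine uniform control over the inverse maps.

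With this in hand, suppose toward a contradiction that $(X,\mathbb{F})$ is equicontinuous but not distal, so there is a proximal pair $(x,y)\in P(X)$ with $x\neq y$. Put $c=d(x,y)>0$. Applying equicontinuity with $\epsilon=c/2$, I obtain $\delta>0$ such that $d(u,v)<\delta$ forces $d(\omega_k(u),\omega_k(v))<c/2$ for every $k\in\Z$ and every $u,v\in X$. By proximality, $\liminf_n d(\omega_n(x),\omega_n(y))=0$, so there exists an index $m$ with $d(\omega_m(x),\omega_m(y))<\delta$. Setting $u=\omega_m(x)$ and $v=\omega_m(y)$ and applying the equicontinuity estimate with $k=-m$ gives $d(\omega_{-m}(u),\omega_{-m}(v))<c/2$. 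But by the structural identity $\omega_{-m}(u)=\omega_{-m}(\omega_m(x))=x$ and likewise $\omega_{-m}(v)=y$, so the left-hand side equals $d(x,y)=c$. This yields $c<c/2$, a contradiction. Hence $P(X)=\Delta$ and the system is distal.

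The main obstacle, and really the only non-routine point, is the structural identity $\omega_{-n}=\omega_n^{-1}$. One must be careful that in a non-autonomous system the backward orbit is obtained by composing the inverses in the order $f_n^{-1}\circ\cdots\circ f_1^{-1}$, which coincides with the inverse of $\omega_n=f_n\circ\cdots\circ f_1$ only after the factors are permuted; commutativity is precisely what licenses that permutation. Once the identity is secured, the remainder is a standard ``pull back along the inverse'' argument, and the uniform (rather than merely pointwise) nature of the equicontinuity hypothesis guarantees that the single $\delta$ produced works simultaneously for the points $\omega_m(x),\omega_m(y)$ irrespective of $m$.
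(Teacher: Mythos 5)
Your proof is correct and follows essentially the same route as the paper's: equicontinuity over all of $\Z$ supplies a uniform $\delta$, proximality supplies a time $m$ at which the orbits are $\delta$-close, and pulling back by $\omega_{-m}$ forces $d(x,y)$ below the chosen threshold. The only difference is presentational: you make explicit the identity $\omega_{-m}=\omega_m^{-1}$ (the one place commutativity enters), which is precisely the step the paper's proof uses tacitly when it passes from $d(\omega_{n_r}(x),\omega_{n_r}(y))<\delta$ to $d(x,y)<\epsilon$.
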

	
\begin{proof}
Let $(X,\mathbb{F})$ be equicontinuous and $x$ and $y$ be proximal. Then thee exists sequence $(n_k)$ of integers such that $\lim \limits_{k\rightarrow \infty} d(\omega_{n_k}(x), \omega_{n_k}(y))=0$. For any $\epsilon>0$, there exists $\delta>0$ such that $d(a,b)<\delta \implies d(\omega_n(a),\omega_n(b))<\epsilon ~~\forall n \in \mathbb{Z}$. As $d(\omega_{n_r}(x), \omega_{n_r}(y))<\delta$ (for some $n_r$), we have $d(x,y)<\epsilon$. As the argument holds for any $\epsilon>0$, we have $x=y$ and hence the system $(X,\mathbb{F})$ is distal.
\end{proof}

\begin{result}
For any non-autonomous system $(X,\mathbb{F})$ generated by a commutative family of homeomorphisms, if $x$ is almost periodic then every point in $\mathcal{O}(x)$ is almost periodic.
\end{result}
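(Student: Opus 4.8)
The plan is to reduce the almost periodicity of an arbitrary point $y=\omega_m(x)$ of $\mathcal{O}(x)$ to the almost periodicity of $x$ itself, exploiting commutativity of the family together with compactness of $X$. First I would fix $y=\omega_m(x)$ for some $m\in\mathbb{Z}$ and unwind the goal: I must show that for each $\epsilon>0$ the return set $\{n\in\mathbb{Z}: d(\omega_n(y),y)<\epsilon\}$ is syndetic.

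The key structural observation is that each $\omega_k$ is a composition of the homeomorphisms $f_i$ and their inverses, and since $\mathbb{F}$ is commutative, any two such compositions commute (commuting maps have commuting inverses, and $f_i^{-1}f_j=f_jf_i^{-1}$ as well). Hence $\omega_n\circ\omega_m=\omega_m\circ\omega_n$ for all $n$, so $\omega_n(y)=\omega_n(\omega_m(x))=\omega_m(\omega_n(x))$, giving $d(\omega_n(y),y)=d(\omega_m(\omega_n(x)),\omega_m(x))$. This is the step where commutativity is genuinely used.

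Next, since $X$ is compact and $\omega_m$ is a homeomorphism, it is uniformly continuous: given $\epsilon>0$ I would choose $\delta>0$ with $d(a,b)<\delta\implies d(\omega_m(a),\omega_m(b))<\epsilon$. By almost periodicity of $x$, the set $A=\{n\in\mathbb{Z}: d(\omega_n(x),x)<\delta\}$ is syndetic. For every $n\in A$ the uniform-continuity estimate yields $d(\omega_m(\omega_n(x)),\omega_m(x))<\epsilon$, i.e.\ $d(\omega_n(y),y)<\epsilon$, so $A\subseteq\{n\in\mathbb{Z}: d(\omega_n(y),y)<\epsilon\}$. Since a superset of a syndetic set is syndetic (the gaps can only shrink), the return set for $y$ is syndetic; as $\epsilon>0$ was arbitrary, $y$ is almost periodic.

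The main obstacle — indeed essentially the only delicate point — is justifying the commutation $\omega_n\circ\omega_m=\omega_m\circ\omega_n$, since this is precisely where the hypothesis that $\mathbb{F}$ is commutative enters and without it the conclusion fails. Once that identity is secured, the remainder is the standard device of transporting return times through a uniformly continuous map, and I expect the uniform continuity from compactness and the ``superset of syndetic is syndetic'' remark to be routine.
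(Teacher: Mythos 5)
Your proof is correct and follows essentially the same route as the paper's: both transport the syndetic return times of $x$ through the map $\omega_m$ (you via a uniform-continuity $\epsilon$--$\delta$ estimate, the paper via the pullback neighborhood $\omega_k^{-1}(U)$, which are the same idea in metric versus topological dress), with commutativity of the family supplying the key identity $\omega_n\circ\omega_m=\omega_m\circ\omega_n$. The only cosmetic remark is that uniform continuity is more than you need, since one of the two points in your estimate is always $x$ itself, so continuity of $\omega_m$ at $x$ already suffices.
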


\begin{proof}
Let $(X,\mathbb{F})$ be generated by a commutative family of homeomorphisms and let $x$ be an almost periodic point. Let $k\in \mathbb{Z}$ and let $U$ be a neighborhood of $\omega_k(x)$. Then as $x$ is almost periodic, the set $\{r\in\mathbb{Z}:\omega_r(x)\in \omega_k^{-1}(U)\}$ is syndetic and hence the set $\{r\in\mathbb{Z}:(\omega_k\circ\omega_r)(x)\in U\}$ is syndetic. Consequently, the set $\{r\in\mathbb{Z}:\omega_r(\omega_k)(x)\in U\}$ is syndetic and thus $\omega_k(x)$ is almost periodic. As the argument holds for any $k$, every point in the orbit of $x$ is almost periodic.
\end{proof}

\begin{Remark}\label{ap}
The above remark establishes the almost periodicity of elements in the orbit when the initial point $x$ is almost periodic. The proof uses the fact that if the initial point $x$ is almost periodic then the syndetic bound of some neighborhood of $x$ carries forward to the neighborhood of the given point in the orbit. As the arguments establish the almost periodicity of elements in the orbit of $\omega_k(x)$, a similar argument ensures almost periodicity of elements in the orbit of $\omega_k(x)$. Thus,  almost periodicity of $x$ ensures almost periodicity of elements of $\mathcal{O}_H(x)$. Further for any equicontinuous system, a similar argument establishes almost periodicity of any limit of almost periodic points and hence we get the following result.
\end{Remark}

\begin{result}
For any non-autonomous system $(X,\mathbb{F})$ generated by a commutative family of homeomorphisms, if $x$ is almost periodic then every point in $\mathcal{O}_H(x)$ is almost periodic. Further, if $(X,\mathbb{F})$ is equicontinuous then every point in $\overline{\mathcal{O}_H(x)}$ is almost periodic.
\end{result}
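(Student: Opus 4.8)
The plan is to treat the two assertions separately. The first will follow by iterating the immediately preceding Proposition, which already establishes that almost periodicity of a point propagates to every element of its \emph{orbit}; the second will follow by showing, via equicontinuity, that a limit of almost periodic points is again almost periodic, and then invoking the first assertion on the dense subset $\mathcal{O}_H(x) \subseteq \overline{\mathcal{O}_H(x)}$.

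For the first assertion, I would argue by induction on the length of the composition defining a generic point of the orbital hull. Write such a point as $y_m = (\omega_{k_m} \circ \cdots \circ \omega_{k_1})(x)$, and set $y_0 = x$, $y_j = \omega_{k_j}(y_{j-1})$ for $1 \le j \le m$. By hypothesis $y_0$ is almost periodic, and since $y_j \in \mathcal{O}(y_{j-1})$, the preceding Proposition applied to the almost periodic point $y_{j-1}$ yields that $y_j$ is almost periodic. After $m$ steps we conclude $y_m$ is almost periodic, and as the tuple $(k_1, \ldots, k_m)$ was arbitrary, every point of $\mathcal{O}_H(x)$ is almost periodic. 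The only point to verify is that the Proposition remains applicable at each stage, which is immediate because its conclusion at one stage supplies the hypothesis for the next; commutativity of $\mathbb{F}$ enters only through the Proposition itself.

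For the second assertion, fix $y \in \overline{\mathcal{O}_H(x)}$ and $\epsilon > 0$. Equicontinuity furnishes $\delta > 0$ with $d(a,b) < \delta \implies d(\omega_n(a), \omega_n(b)) < \epsilon/3$ for all $n \in \mathbb{Z}$ simultaneously. Choose $z \in \mathcal{O}_H(x)$ with $d(z, y) < \min\{\delta, \epsilon/3\}$; by the first assertion $z$ is almost periodic, so $S = \{n \in \mathbb{Z} : d(\omega_n(z), z) < \epsilon/3\}$ is syndetic. For each $n \in S$, the triangle inequality applied to $\omega_n(y), \omega_n(z), z, y$ gives $d(\omega_n(y), y) < \epsilon/3 + \epsilon/3 + \epsilon/3 = \epsilon$, so $\{n : d(\omega_n(y), y) < \epsilon\}$ contains the syndetic set $S$ and is therefore syndetic. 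Since $\epsilon$ was arbitrary, $y$ is almost periodic.

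The main obstacle, though ultimately mild, is securing syndeticity of the return times of the limit point \emph{uniformly in $n$}: the approximating point $z$ only controls $d(\omega_n(z), z)$, and one needs the deviation $d(\omega_n(y), \omega_n(z))$ to stay small for every $n$ at once. This is precisely what equicontinuity delivers, through a single $\delta$ valid for all $\omega_n$. I expect the remaining steps — the $\epsilon/3$ bookkeeping and the fact that a superset of a syndetic set is syndetic — to be routine. Finally, equicontinuity alone does not create almost periodic points: Example \ref{Example 2} is an equicontinuous minimal system with none, so the hypothesis that $x$ be almost periodic is essential, and the result describes $\overline{\mathcal{O}_H(x)}$ only once a single such point is present.
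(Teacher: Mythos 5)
Your proposal is correct and follows essentially the same route as the paper: the first assertion by iterating the preceding proposition along the composition (your explicit induction just makes rigorous what the paper delegates to its remark), and the second by the same equicontinuity-plus-$\epsilon/3$ triangle-inequality argument, with the same key observation that a single $\delta$ valid for all $\omega_n$ is what transfers the syndetic return times from the approximating point to the limit point.
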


\begin{proof}
Let $(X,\mathbb{F})$ be a non-autonomous system and let $x\in X$. As almost periodicity of $x$ guarantees $\omega_k(x)$ to be almost periodic for all $k\in\mathbb{Z}$ (Proposition \ref{ap}), every point in $\mathcal{O}_H(x)$ is almost periodic. Further, let $(X,\mathbb{F})$ be equicontinuous, $y\in\overline{\mathcal{O}_H(x)}\setminus\mathcal{O}_H(x)$ be fixed and $\epsilon>0$ be given. As $(X,\mathbb{F})$ is equicontinuous, there exists $\delta<\frac{\epsilon}{3}$ such that $d(x,y)<\delta$ $\implies$ $d(\omega_k(x),\omega_k(y))<\frac{\epsilon}{3}$ for all $k\in\mathbb{Z}$. Also as $y\in\overline{\mathcal{O}_H(x)}\setminus\mathcal{O}_H(x)$, there exists $p(x)=\omega_{k_t}\omega_{k_{t-1}}...\omega_{k_2}\omega_{k_1}(x)\in \mathcal{O}_H(x)$ such that $d(p(x),y)<\delta$. As $p(x)$ is almost periodic, the set $\{k_r : d(\omega_{k_r}(p(x)), p(x))<\frac{\epsilon}{3}\}$ is syndetic. Further, as $d(\omega_{k_r}(y),y)\leq d(\omega_{k_r}(y),\omega_{k_r}(p(x)))+d(\omega_{k_r}(p(x)),p(x))+d(p(x),y)<\epsilon$ the set $\{m : d(\omega_{m}(y), y)<\epsilon\}$ is syndetic. Consequently every point in $\overline{\mathcal{O}_H(x)}$ is almost periodic and the proof is complete.
\end{proof}

\begin{result}
For any equicontinuous system $(X,\mathbb{F})$ generated by a commutative family of homeomorphisms, if $x\in X$ is an almost periodic point then $\overline{\mathcal{O}_H(x)}$ is uniformly almost periodic.
\end{result}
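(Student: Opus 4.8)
The plan is to combine equicontinuity, compactness of $\overline{\mathcal{O}_H(x)}$, and the pointwise almost periodicity already established in the previous result to extract a single syndetic bound valid at every point simultaneously. Fix $\epsilon>0$. First I would invoke equicontinuity to choose $\delta$ with $0<\delta<\frac{\epsilon}{3}$ such that $d(a,b)<\delta$ forces $d(\omega_n(a),\omega_n(b))<\frac{\epsilon}{3}$ for all $n\in\mathbb{Z}$ at once; this uniformity in $n$ is the feature that makes the whole argument go through.

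Next, since $\overline{\mathcal{O}_H(x)}$ is a closed subset of the compact space $X$, it is itself compact, so I would cover it by finitely many open balls $B(y_1,\delta),\ldots,B(y_m,\delta)$ with centres $y_i\in\overline{\mathcal{O}_H(x)}$. By the preceding result every point of $\overline{\mathcal{O}_H(x)}$ is almost periodic, so in particular each $y_i$ is almost periodic and the return set $A_i=\{n\in\mathbb{Z}:d(\omega_n(y_i),y_i)<\frac{\epsilon}{3}\}$ is syndetic. Let $M_i$ be a syndetic bound for $A_i$ and put $M=\max_{1\le i\le m}M_i$, noting that $M$ depends only on $\epsilon$ and on the finitely many centres, not on any individual point.

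The core step is transferring these finitely many bounds to an arbitrary point $y\in\overline{\mathcal{O}_H(x)}$. Choosing $i$ with $d(y,y_i)<\delta$, for every $n\in A_i$ the triangle inequality gives
$$d(\omega_n(y),y)\le d(\omega_n(y),\omega_n(y_i))+d(\omega_n(y_i),y_i)+d(y_i,y),$$
where the first term is below $\frac{\epsilon}{3}$ by the choice of $\delta$ together with equicontinuity, the second is below $\frac{\epsilon}{3}$ because $n\in A_i$, and the third is below $\delta<\frac{\epsilon}{3}$. Hence $A_i\subseteq\{n\in\mathbb{Z}:d(\omega_n(y),y)<\epsilon\}$, and since any superset of an $M_i$-syndetic set is again $M_i$-syndetic, the $\epsilon$-return set of $y$ is $M$-syndetic. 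As this holds for every $y\in\overline{\mathcal{O}_H(x)}$ with the same $M$, the system $\overline{\mathcal{O}_H(x)}$ is uniformly almost periodic.

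I expect the main obstacle to be conceptual rather than computational: one must recognise that equicontinuity supplies a modulus $\delta$ uniform over all the time-indexed maps $\omega_n$ simultaneously, which is precisely what allows a finite cover (obtained from compactness) to collapse the infinitely many individual syndetic bounds into a single $M$. Without this uniformity the triangle-inequality transfer would fail, since closeness of $y$ to a centre $y_i$ would not be preserved by $\omega_n$ uniformly in $n$; commutativity enters only indirectly, through the cited result guaranteeing that the centres $y_i$ are almost periodic.
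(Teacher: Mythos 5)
Your proposal is correct and follows essentially the same route as the paper: an equicontinuity modulus $\delta$ uniform in $n$, a finite $\delta$-dense set in the compact set $\overline{\mathcal{O}_H(x)}$ whose points are almost periodic by the preceding result, and a triangle-inequality transfer of each centre's return times to an arbitrary nearby point. If anything, your final step is slightly cleaner than the paper's: you take the maximum $M$ of the finitely many syndetic bounds $M_i$, which is exactly what the definition of uniform almost periodicity requires, whereas the paper instead asserts a single common syndetic set of return times for all the centres.
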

	
\begin{proof}
Let $\epsilon>0$ and $u\in \overline{\mathcal{O}_H(x)}$ be any arbitrary element. As $(X,\mathbb{F})$ is equicontinuous, there exists a $\delta>0$ such that $d(x,y)<\delta\implies d(\omega_k(x),\omega_k(y))<\frac{\epsilon}{4},~ \forall x,y\in X, k\in \Z$. Let $\eta=min\{\frac{\epsilon}{4},\delta\}$ and let $F=\{x_1,x_2,...x_n\}$ be a $\eta$-dense set in $\overline{\mathcal{O}_H(x)}$. As $F$ is $\eta$-dense, there exists $x_r\in F$ such that $d(u,x_r)<\eta$ and hence $d(\omega_k(x_r),\omega_k(u))<\frac{\epsilon}{4}$ for all $k\in \Z$. Consequently, if orbit of $x_r$ returns to its $\frac{\epsilon}{4}$-neighborhood syndetically (at times $(n_r)$), $d(u,\omega_{n_r}(u))\leq d(u,x_i)+d(x_i,\omega_{n_r}(x_i))+d(\omega_{n_r}(x_i),\omega_{n_r}(u))<\epsilon$ and hence orbit of $u$ returns to its $\epsilon$-neighborhood syndetically (at same set of times $(n_r)$). As $(X,\mathbb{F})$ is equicontinuous, there exists a common syndetic set for $\{x_1,x_2,\ldots,x_r\}$ and hence every point returns to its $\epsilon$-neighborhood sydetically with the same syndetic set. Thus, $\overline{\mathcal{O}_H(x)}$ is uniformly almost periodic and the proof is complete.
\end{proof}


\begin{result}
For any non-autonomous system $(X,\mathbb{F})$ generated by a commutative family of homeomorphisms, if $(X,\mathbb{F})$ is equicontinuous then $\overline{\mathcal{O}_H(x)}=\overline{\mathcal{O}_H(y)}$ for all $y\in\overline{\mathcal{O}(x)}$.
\end{result}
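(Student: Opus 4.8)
The plan is to establish the two inclusions $\overline{\mathcal{O}_H(y)}\subseteq\overline{\mathcal{O}_H(x)}$ and $\overline{\mathcal{O}_H(x)}\subseteq\overline{\mathcal{O}_H(y)}$ separately, for an arbitrary $y\in\overline{\mathcal{O}(x)}$. Two facts will be used throughout. First, since each $\omega_k$ is a homeomorphism, the closure of an invariant set is again invariant, so $\overline{\mathcal{O}_H(z)}$ is a closed invariant set for every $z$. Second, as recalled in the introduction, $\mathcal{O}_H(z)$ is the smallest invariant set containing $z$, hence it is contained in every invariant set that contains $z$. The first inclusion is then immediate: since $y\in\overline{\mathcal{O}(x)}\subseteq\overline{\mathcal{O}_H(x)}$ and $\overline{\mathcal{O}_H(x)}$ is closed and invariant, it must contain the smallest invariant set $\mathcal{O}_H(y)$ through $y$, and therefore its closure $\overline{\mathcal{O}_H(y)}$ as well.

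For the reverse inclusion it suffices to show $x\in\overline{\mathcal{O}_H(y)}$, because the same minimality argument then yields $\mathcal{O}_H(x)\subseteq\overline{\mathcal{O}_H(y)}$, and closing up finishes the job. To locate $x$ in the hull of $y$, I would first record the identity $\omega_n^{-1}=\omega_{-n}$, which holds precisely because $\mathbb{F}$ is commutative: writing $\omega_n=f_n\circ\cdots\circ f_1$ and $\omega_{-n}=f_n^{-1}\circ\cdots\circ f_1^{-1}$, commutativity lets the factors cancel so that $\omega_{-n}\circ\omega_n=\mathrm{id}$. Next, choose integers $n_i$ with $\omega_{n_i}(x)\to y$ (such a sequence exists since $y\in\overline{\mathcal{O}(x)}$). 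Applying $\omega_{-n_i}$, which is itself an orbit map and so sends $y$ into $\mathcal{O}(y)$, one has $\omega_{-n_i}(\omega_{n_i}(x))=x$ exactly. The crucial observation is that $\{\omega_{-n_i}\}$ is a subfamily of the two-sided orbit family $\{\omega_k:k\in\mathbb{Z}\}$, which is equicontinuous by hypothesis; hence from $d(\omega_{n_i}(x),y)\to 0$ and equicontinuity we obtain $d(x,\omega_{-n_i}(y))=d(\omega_{-n_i}(\omega_{n_i}(x)),\omega_{-n_i}(y))\to 0$. Thus the orbit points $\omega_{-n_i}(y)\in\mathcal{O}(y)$ converge to $x$, giving $x\in\overline{\mathcal{O}(y)}\subseteq\overline{\mathcal{O}_H(y)}$, which completes the argument.

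The genuinely substantive step, and the one place where both hypotheses are indispensable, is the reversal of the convergence $\omega_{n_i}(x)\to y$ into orbit points of $y$ tending to $x$. Commutativity is what turns the inverse $\omega_{n_i}^{-1}$ back into a bona fide orbit map $\omega_{-n_i}$, so that $\omega_{-n_i}(y)$ genuinely lives in $\mathcal{O}(y)$, while equicontinuity, applied at the negative indices (which is exactly why the two-sided index set is needed), is what transports the vanishing distance uniformly through these inverse maps. Without commutativity the inverse of an orbit map need not be an orbit map, and without equicontinuity the limit could not be carried across; I expect the clean verification of these two points to be the only real obstacle, with the surrounding manipulations of invariant sets being routine.
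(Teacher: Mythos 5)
Your proof is correct and follows essentially the same route as the paper's: both use equicontinuity at the negative indices together with commutativity (so that $\omega_{-n}\circ\omega_{n}=\mathrm{id}$) to convert $\omega_{n_i}(x)\to y$ into $\omega_{-n_i}(y)\to x$, and then invoke the smallest-invariant-set property of the orbital hull to obtain $\overline{\mathcal{O}_H(x)}=\overline{\mathcal{O}_H(y)}$. If anything, your write-up is slightly more careful: the paper asserts along the way that $\overline{\mathcal{O}(x)}=\overline{\mathcal{O}(y)}$, which its displayed argument only establishes in the direction $x\in\overline{\mathcal{O}(y)}$, whereas you use exactly the inclusion that is actually needed for the conclusion.
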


\begin{proof}
Let $(X,\mathbb{F})$ be an equicontinuous system generated by a commutative family of homeomorphisms and let $x\in X$.  Let $y\in \overline{\mathcal{O}(x)}$ and $\epsilon>0$ be given. As $(X,\mathbb{F})$ is equicontinuous, there exists $\delta>0$ such that $d(a,b)<\delta$ ensures $d(\omega_{n}(x),\omega_{n}(y))<\epsilon~~ \forall n\in \Z$. Also $y\in \overline{\mathcal{O}(x)}$ forces some $n_k\in\Z$ such that $d(\omega_{n_k}(x),y)<\delta$ and hence $d(x,\omega_{-n_k}(y))<\epsilon$. As the argument holds for any $\epsilon>0$, we have $\overline{\mathcal{O}(x)}= \overline{\mathcal{O}(y)}$. Further, as $x\in \overline{\mathcal{O}(y)}\subset \overline{\mathcal{O}_H(y)}$ implies $\overline{\mathcal{O}_H(x)}\subset \overline{\mathcal{O}_H(y)}$ (as orbital hull of $x$ is the smallest invariant set containing $x$), $x$ and $y$ have identical orbital hulls and hence elements in the orbit closures generate orbital hulls identical to the original point and the proof is complete.
\end{proof}

\begin{remark}\label{oh}
The above result establishes that if the system $(X,\mathbb{F})$ is equicontinuous, $\overline{\mathcal{O}_H(y)}=\overline{\mathcal{O}_H(x)}$ for any point $y$ in $\overline{\mathcal{O}(x)}$. As the arguments can repeated for elements of the orbit, $\overline{\mathcal{O}_H(y)}=\overline{\mathcal{O}_H(x)}$ for any point $y$ in $\mathcal{O}_H(x)$. It may be noted that if $\{(\omega_{k_n}\circ\omega_{k_{n-1}}\circ\ldots\circ\omega_{k_1}): k_i\in \Z, n\in \N\}$ is an equicontinuous family, then similar arguments establish $\overline{\mathcal{O}_H(y)}=\overline{\mathcal{O}_H(x)}$ for any point $y$ in $\overline{\mathcal{O}_H(x)}$. Consequently, all elements in $\overline{\mathcal{O}_H(x)}$ generate the same set (equal to $\overline{\mathcal{O}_H(x)}$) and hence the system is minimal. However, if the family under discussion fails to be equicontinuous, the non-autonomous system may fail to be minimal (even when the generating family is commutative). Thus we get the following results.\\
\end{remark}

\begin{corollary}
For any non-autonomous system $(X,\mathbb{F})$ generated by a commutative family of homeomorphisms, if $\{(\omega_{k_n}\circ\omega_{k_{n-1}}\circ\ldots\circ\omega_{k_1}): k_i\in \Z, n\in \N\}$ is equicontinuous then every point in $X$ generates a minimal subsystem of $(X,\mathbb{F})$.
\end{corollary}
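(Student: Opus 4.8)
The plan is to fix an arbitrary $x\in X$ and show that the closed invariant set $\overline{\mathcal{O}_H(x)}$ is itself a minimal system; since $x$ is arbitrary, this is exactly the assertion that every point generates a minimal subsystem. Two standing facts set the stage. First, because each $\omega_k$ is a homeomorphism and $\mathbb{F}$ is commutative, one has $\omega_{-k}=\omega_k^{-1}$, so the family $G=\{\omega_{k_n}\circ\cdots\circ\omega_{k_1}:k_i\in\Z,\ n\in\N\}$ is the group generated by the maps $\omega_k$ and is in particular closed under inversion. Second, since the orbital hull is the smallest invariant set containing $x$ and each $\omega_k$ is continuous, its closure $\overline{\mathcal{O}_H(x)}$ is the smallest \emph{closed} invariant set containing $x$.

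The heart of the argument is the claim, recorded informally in Remark \ref{oh}, that $\overline{\mathcal{O}_H(y)}=\overline{\mathcal{O}_H(x)}$ for every $y\in\overline{\mathcal{O}_H(x)}$. I would prove this by two inclusions. The inclusion $\overline{\mathcal{O}_H(y)}\subseteq\overline{\mathcal{O}_H(x)}$ is immediate, as $\overline{\mathcal{O}_H(x)}$ is a closed invariant set containing $y$ and $\overline{\mathcal{O}_H(y)}$ is the smallest such. For the reverse inclusion it suffices to show $x\in\overline{\mathcal{O}_H(y)}$. Fix $\epsilon>0$; equicontinuity of $G$ yields $\delta>0$ with $d(a,b)<\delta\implies d(g(a),g(b))<\epsilon$ for all $g\in G$. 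Since $y\in\overline{\mathcal{O}_H(x)}$ there is $g\in G$ with $d(g(x),y)<\delta$, and applying the inverse $g^{-1}\in G$ gives $d(x,g^{-1}(y))=d(g^{-1}(g(x)),g^{-1}(y))<\epsilon$ with $g^{-1}(y)\in\mathcal{O}_H(y)$. As $\epsilon>0$ was arbitrary, $x\in\overline{\mathcal{O}_H(y)}$, and the smallest-closed-invariant property upgrades this to $\overline{\mathcal{O}_H(x)}\subseteq\overline{\mathcal{O}_H(y)}$.

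Granting the claim, minimality of $\overline{\mathcal{O}_H(x)}$ follows formally: if $Z\subseteq\overline{\mathcal{O}_H(x)}$ is non-empty, closed and invariant, choose $z\in Z$; then $\overline{\mathcal{O}_H(z)}\subseteq Z$, while $z\in\overline{\mathcal{O}_H(x)}$ forces $\overline{\mathcal{O}_H(z)}=\overline{\mathcal{O}_H(x)}$, so $Z=\overline{\mathcal{O}_H(x)}$ and no proper such subset exists. I expect the main obstacle to be the reverse inclusion above, precisely the step of invoking equicontinuity on the inverse map $g^{-1}$. This is where the strengthened hypothesis matters: the preceding Proposition only assumed the single maps $\omega_n$ equicontinuous and could relate orbital hulls of points lying in the same \emph{orbit} closure, whereas here the equicontinuity of the whole composition family $G$, together with its closure under inverses coming from commutativity, is exactly what lets one transport the approximation $d(g(x),y)<\delta$ backward to an approximation of $x$ by a point of $\mathcal{O}_H(y)$.
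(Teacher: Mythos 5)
Your proposal is correct and takes essentially the same route as the paper: the paper's proof simply defers to Remark \ref{oh}, whose ``similar arguments'' are precisely your key step of applying equicontinuity of the composition family, together with its closure under inverses coming from commutativity, to transport $d(g(x),y)<\delta$ back to $d(x,g^{-1}(y))<\epsilon$ and conclude $\overline{\mathcal{O}_H(y)}=\overline{\mathcal{O}_H(x)}$ for all $y\in\overline{\mathcal{O}_H(x)}$, whence minimality. You merely make explicit the details (the two inclusions and the smallest-closed-invariant-set argument) that the paper leaves implicit.
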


\begin{proof}
The proof follows from discussions in Remark \ref{oh}.
\end{proof}

\begin{example}
Let $X=[0,1]$ and define $f_n:X\rightarrow X$ such that $f_{2n-1}(x)=x^{2}$ and $f_{2n}(x)=\sqrt{x}$ for $n\in \N$. Then as every point is periodic  (with finite orbit), the system $(X,\mathbb{F})$ is equicontinuous. However, as $0$ is a fixed point such that $0\in \overline{\mathcal{O}_H(\frac{1}{2})}$, we have $\overline{\mathcal{O}_H(0)}\neq \overline{\mathcal{O}_H(\frac{1}{2})}$ and the above result cannot be generalized to elements in $\overline{\mathcal{O}_H(x)}$.
\end{example}
		
\begin{result}\label{st1}
For any non-autonomous dynamical system $(X,\mathbb{F})$ generated by a commutative family $\mathbb{F}$, $(X,\mathbb{F})$ is transitive if and only if it has a point with dense orbit.
\end{result}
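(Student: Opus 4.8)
The plan is to prove the two implications separately, reading \emph{transitive} as \emph{topologically transitive}. Throughout I would use that each $\omega_k$ is a homeomorphism of $X$, being a finite composition of the $f_i$ and their inverses, and that commutativity of $\mathbb{F}$ forces the whole family $\{\omega_k:k\in\mathbb{Z}\}$ to commute with one another.

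First I would settle the implication that topological transitivity produces a point with dense orbit, via the standard Baire-category argument. Since $X$ is compact metric it is a Baire space; I fix a countable base $\{V_i:i\in\mathbb{N}\}$ of non-empty open sets and set $W_i=\bigcup_{k\in\mathbb{Z}}\omega_k^{-1}(V_i)$. Each $W_i$ is open because every $\omega_k$ is continuous, and transitivity makes each $W_i$ dense: given any non-empty open $U$ there is $k$ with $\omega_k(U)\cap V_i\neq\emptyset$, i.e. $U\cap\omega_k^{-1}(V_i)\neq\emptyset$, so $U$ meets $W_i$. By Baire's theorem $\bigcap_i W_i$ is dense, hence non-empty, and any $x$ in this intersection satisfies $\mathcal{O}(x)\cap V_i\neq\emptyset$ for every $i$; since $\{V_i\}$ is a base this gives $\overline{\mathcal{O}(x)}=X$. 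Note that this direction uses neither commutativity nor anything beyond continuity of the $\omega_k$.

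The reverse implication is where commutativity does the real work. Suppose $x$ has dense orbit and let $U,V$ be non-empty open. Density gives $m$ with $\omega_m(x)\in U$, and the point is to show that the orbit of $\omega_m(x)$ is again dense, so that it meets $V$. For this I would establish the identity $\mathcal{O}(\omega_m(x))=\omega_m(\mathcal{O}(x))$, which holds because $\omega_k(\omega_m(x))=\omega_m(\omega_k(x))$ for every $k$ precisely by commutativity of $\{\omega_k\}$. Since $\omega_m$ is a homeomorphism it commutes with closure and carries $X$ onto $X$, so $\overline{\mathcal{O}(\omega_m(x))}=\omega_m(\overline{\mathcal{O}(x)})=\omega_m(X)=X$. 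Hence $\mathcal{O}(\omega_m(x))$ meets $V$, yielding $k$ with $\omega_k(\omega_m(x))\in V$; combined with $\omega_m(x)\in U$ this gives $\omega_k(U)\cap V\neq\emptyset$, which is transitivity.

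The main obstacle is exactly this last step, that a shifted point $\omega_m(x)$ still carries a dense orbit. In an autonomous system it is automatic, since $\omega_m(x)$ lies on the very same orbit as $x$; here the orbit $\{\omega_k(\omega_m(x)):k\in\mathbb{Z}\}$ is genuinely different from $\{\omega_n(x):n\in\mathbb{Z}\}$, because in a non-autonomous system $\omega_k\circ\omega_m$ need not coincide with any single $\omega_n$. Commutativity is what rescues the argument by letting me pull the shift outside as $\mathcal{O}(\omega_m(x))=\omega_m(\mathcal{O}(x))$ and then invoke that a homeomorphism sends a dense set to a dense set. I would emphasize that without commutativity this identity fails, so the hypothesis on $\mathbb{F}$ cannot simply be dropped in the reverse direction.
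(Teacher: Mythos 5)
Your proposal is correct and follows essentially the same route as the paper: the forward direction is the standard Baire-category argument (the paper phrases it contrapositively with finite $\frac{1}{k}$-dense sets and countably many nowhere dense complements, while you use a countable base directly, but it is the same idea), and the converse uses exactly the paper's key identity $\mathcal{O}(\omega_m(x))=\omega_m(\mathcal{O}(x))$ from commutativity together with the fact that a homeomorphism preserves density. Your closing remark correctly isolates commutativity as the essential hypothesis in the reverse direction, just as the paper does.
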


\begin{proof}
Let $(X,\mathbb{F})$ be a transitive system such that no point in $X$ has dense orbit in $(X,\mathbb{F})$. As $X$ is compact, for each $k\in\mathbb{N}$ there exists a finite subset $F_k$ such that $F_k$ is $\frac{1}{k}$-dense in $X$. As no point in $X$ has dense orbit, for each $x\in X$ there exists $r\in\mathbb{N}$ and $x_r\in F_r$ such that $\mathcal{O}(x)\cap S(x_r,\frac{1}{r})=\emptyset$. However, as $(X,\mathbb{F})$ is transitive, $O_{x,r}=\bigcup\limits_{n=0}^{\infty} \omega^{-1}_n (S(x_r,\frac{1}{r}))$ is an open dense subset of $X$. Thus $C_{x,r}=O_{x,r}^c$ is a non-empty closed nowhere dense subset of $X$. As choices of $x_r$ are countable, $X$ is  countable union nowhere dense subsets of $X$ which contradicts compactness of $X$ and hence $X$ must have a point with dense orbit.\\

Conversely, let $x\in X$ such that $\mathcal{O}(x)$ is dense in $X$ and let $U$ and $V$ be non-empty open subsets of $X$. As orbit of $x$ is dense, there exists $k\in\mathbb{N}$ such that $\omega_k(x)\in U$. Further, note that $\mathcal{O}(\omega_k(x))=\{\omega_n(\omega_k(x)):n\in\mathbb{N}\} =\{\omega_k(\omega_n(x)):n\in\mathbb{N}\} =\omega_k(\mathcal{O}(x))$ is dense in $X$ (as $\mathbb{F}$ is commutative and surjective). Thus there exists $r\in\mathbb{N}$ such that $\omega_r(\omega_k(x))\in V$ and hence $\omega_r(U)\cap V\neq\emptyset$. Consequently, $(X,\mathbb{F})$ is transitive and the proof is complete.
\end{proof}

\begin{remark}
The above proof establishes a necessary and sufficient criteria to establish transitivity of a non-autonomous dynamical system $(X,\mathbb{F})$. In particular, the result establishes existence of a dense orbit to be an equivalent criteria for a non-autonomous system to be transitivity. In addition, if the system is equicontinuous then the points in the space move in a synchronized manner (which may be visualized better via uniform almost periodicity) and hence denseness of an orbit (in $X$) forces denseness of all the orbits (in $X$). Hence we get the following result.
\end{remark}

\begin{result}
For any equicontinuous non-autonomous system $(X,\mathbb{F})$ generated by a commutative family of homeomorphisms, $(X,\mathbb{F})$ is transitive if and only if $\mathcal{O}(x)$ is dense in $X$ for all $x\in X$ (and hence $(X,\mathbb{F})$ is minimal). 
\end{result}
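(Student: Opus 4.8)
The plan is to prove the two implications separately, with the reverse direction being essentially immediate and the forward direction carrying the real content through equicontinuity. Throughout I will use that, since each $f_i$ is a homeomorphism and $\mathbb{F}$ is commutative, every $\omega_m$ is a surjective homeomorphism of $X$ satisfying $\omega_n\circ\omega_m=\omega_m\circ\omega_n$ for all $m,n\in\Z$.

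For the reverse implication, suppose $\mathcal{O}(x)$ is dense in $X$ for every $x\in X$. Then in particular some point has dense orbit, so $(X,\mathbb{F})$ is transitive by Proposition \ref{st1}. Moreover, since $\mathcal{O}(x)\subseteq\mathcal{O}_H(x)$, density of each orbit forces $\overline{\mathcal{O}_H(x)}=X$ for every $x$, whence $(X,\mathbb{F})$ is minimal by the characterization of minimality via dense orbital hulls established earlier. This also disposes of the parenthetical ``and hence minimal'' clause.

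For the forward implication, assume $(X,\mathbb{F})$ is transitive. By Proposition \ref{st1} there is a point $x_0$ with $\overline{\mathcal{O}(x_0)}=X$; the task is to upgrade this single dense orbit to density of every orbit, and this is the only step where equicontinuity enters. Fix an arbitrary $y\in X$, a target $z\in X$, and $\epsilon>0$. Using equicontinuity, choose $\delta>0$ so that $d(a,b)<\delta$ implies $d(\omega_n(a),\omega_n(b))<\frac{\epsilon}{2}$ for all $n\in\Z$. Since $\mathcal{O}(x_0)$ is dense, pick $m$ with $d(\omega_m(x_0),y)<\delta$. The key observation is that $\mathcal{O}(\omega_m(x_0))$ is itself dense: because $\omega_m$ commutes with every $\omega_n$ and is a surjective homeomorphism, $\mathcal{O}(\omega_m(x_0))=\omega_m(\mathcal{O}(x_0))$ and $\overline{\omega_m(\mathcal{O}(x_0))}=\omega_m(\overline{\mathcal{O}(x_0)})=\omega_m(X)=X$. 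Hence there is an $n$ with $d(\omega_n(\omega_m(x_0)),z)<\frac{\epsilon}{2}$, while the choice of $\delta$ gives $d(\omega_n(y),\omega_n(\omega_m(x_0)))<\frac{\epsilon}{2}$; the triangle inequality then yields $d(\omega_n(y),z)<\epsilon$. As $z$ and $\epsilon$ were arbitrary, $\overline{\mathcal{O}(y)}=X$, and as $y$ was arbitrary every orbit is dense. (Equivalently, one may simply invoke the earlier result that an equicontinuous commutative system satisfies $\overline{\mathcal{O}(y)}=\overline{\mathcal{O}(x)}$ whenever $y\in\overline{\mathcal{O}(x)}$, applied with $x=x_0$ and $\overline{\mathcal{O}(x_0)}=X$.)

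I expect the main obstacle to be the forward direction, and within it the precise interplay of equicontinuity with commutativity: one must transfer density from $\mathcal{O}(x_0)$ to the orbit of the nearby point $\omega_m(x_0)$ (which is exactly where $\omega_m$ being a surjective homeomorphism that commutes with the flow is essential) and then ride the uniform closeness of these two orbits, guaranteed by equicontinuity, across to $\mathcal{O}(y)$. The remaining verifications — that $\omega_m$ is a homeomorphism commuting with each $\omega_n$, and the triangle-inequality estimate — are routine and I would not belabor them.
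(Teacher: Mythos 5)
Your proof is correct and follows essentially the same route as the paper's: both directions hinge on Proposition \ref{st1}, and the forward implication transfers density from $\mathcal{O}(x_0)$ to an arbitrary orbit by choosing $u=\omega_m(x_0)$ close to $y$, noting via commutativity that $\mathcal{O}(\omega_m(x_0))=\omega_m(\mathcal{O}(x_0))$ is dense, and closing with equicontinuity plus the triangle inequality. Your write-up is in fact slightly more careful than the paper's at the step $\overline{\mathcal{O}(u)}=X$, which the paper asserts without justification, but the underlying argument is identical.
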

	
\begin{proof}
Let $(X,\mathbb{F})$ be an equicontinuous system generated by a commutative family of homeomorphisms and let $(X,\mathbb{F})$ be transitive. By Proposition \ref{st1}, there exists $x\in X$ such that $\mathcal{O}(x)$ is dense in $X$. Let $y\in X$ be arbitrary and let $U=S(z,\epsilon)$ be any non-empty open subset of $X$. As $(X,\mathbb{F})$ is equicontinuous, there exists $\delta>0$ such that $d(a,b)<\delta$ ensures $d(\omega_k(a),\omega_k(b))<\frac{\epsilon}{4}$ for all $a,b\in X~~, k\in \Z$. As $\mathcal{O}(x)$ is dense in $X$, there exists $u\in \mathcal{O}(x)$ such that $d(u,y)<\delta$ and hence  $d(\omega_k(u),\omega_k(y))<\frac{\epsilon}{4}$ for all $k\in\Z$. As denseness of $\mathcal{O}(x)$ (in $X$) forces $\overline{\mathcal{O}(u)}=X$, we have $d(z,\omega_r(u))<\frac{\epsilon}{4}$ for some $r\in \Z$. Thus, $d(z,\omega_r(y))<d(z,\omega_r(u))+d(\omega_r(u),\omega_r(y))<\epsilon$ and hence orbit of $y$ intersects $S(z,\epsilon)$. As the argument holds for any $y\in X$, orbit of any point is dense in $X$. As the proof of converse is trivial, $\mathcal{O}(x)$ is dense in $X$ for some $x\in X$ if and only if $\mathcal{O}(x)$ is dense in $X$ for all $x\in X$. Finally, as denseness of orbit of a point forces denseness of orbital hull (of the same point), $(X,\mathbb{F})$ is minimal and the proof is complete.
\end{proof}

\begin{remark}\label{tth}
The above proof establishes a necessary and sufficient criteria for an equicontinuous system to be transitive. It may be noted that as minimality of a system does not guarantee transitivity in the non-autonomous case, a minimal equicontinuous system mail fail to be transitive (as shown in Example \ref{ex4}). Further, as the above arguments hold good when the element of the orbit is replaced by element of the orbital hull, a similar set of arguments establish equivalence of denseness of orbital hulls (among elements of the space $X$). Also, for autonomous systems, it is known that a transitive system with dense set of periodic points is necessarily sensitive. However, as the governing rule may vary significantly in a non-autonomous setting, an analogous version of the result fails to hold good in a non-autonomous setting. However, as totally transitive systems guarantee visiting times to intersect multiples of each integer, totally transitive systems with dense set of transitive points are necessarily sensitive. We now establish our claims below.
\end{remark}
	
\begin{result}
For any equicontinuous non-autonomous system $(X,\mathbb{F})$ generated by a commutative family of homeomorphisms, if $\mathcal{O}_H(x)$ is dense (in $X$) for some $x\in X$ then $\mathcal{O}_H(x)$ is dense (in $X$) for all $x\in X$
\end{result}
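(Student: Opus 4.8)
The plan is to mirror the proof of the preceding proposition (the orbit version), replacing the single maps $\omega_r$ by compositions drawn from the orbital hull, and then to transfer density from the given dense orbital hull to an arbitrary point using equicontinuity. Fix an arbitrary $y\in X$, a target point $z\in X$ and $\epsilon>0$; it suffices to produce an element of $\mathcal{O}_H(y)$ lying within $\epsilon$ of $z$. First I would invoke equicontinuity to extract $\delta>0$ with $d(a,b)<\delta\implies d(\omega_n(a),\omega_n(b))<\frac{\epsilon}{3}$ for all $n\in\Z$, and then use the hypothesis that $\mathcal{O}_H(x)$ is dense to choose $u=\omega_{k_t}\circ\cdots\circ\omega_{k_1}(x)\in\mathcal{O}_H(x)$ with $d(u,y)<\delta$.

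The key structural observation is that, because $\mathbb{F}$ is commutative and $\omega_{-k}=\omega_k^{-1}$, the set of compositions $\{\omega_{k_n}\circ\cdots\circ\omega_{k_1}:k_i\in\Z,\ n\in\N\}$ is closed under composition and inversion and contains the identity $\omega_0$, so it is exactly the group generated by $\{\omega_n:n\in\Z\}$, and $\mathcal{O}_H(x)$ is precisely the orbit of $x$ under this group. Consequently every point of $\mathcal{O}_H(x)$ generates the same orbital hull, whence $\mathcal{O}_H(u)=\mathcal{O}_H(x)$ and in particular $\mathcal{O}_H(u)$ is again dense. This lets me pick a composition $\Phi=\omega_{r_s}\circ\cdots\circ\omega_{r_1}$ with $d(\Phi(u),z)<\frac{\epsilon}{3}$. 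It then remains to carry this back to $y$: since $d(u,y)<\delta$, if $d(\Phi(u),\Phi(y))<\frac{2\epsilon}{3}$ then $d(\Phi(y),z)\le d(\Phi(y),\Phi(u))+d(\Phi(u),z)<\epsilon$, so $\Phi(y)\in\mathcal{O}_H(y)$ meets $S(z,\epsilon)$, and as $z$ and $\epsilon$ are arbitrary, $\mathcal{O}_H(y)$ is dense.

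The main obstacle is precisely this last transfer step, namely controlling the composite map $\Phi$ on the nearby pair $u,y$. Equicontinuity as defined supplies a common modulus for each individual $\omega_{r_i}$, but chaining these moduli along a composition of length $s$ degrades the estimate with $s$, so single-map equicontinuity does not by itself bound $d(\Phi(u),\Phi(y))$ uniformly over all $\Phi$. To close this gap I would upgrade to equicontinuity of the orbital-hull family $\{\omega_{k_n}\circ\cdots\circ\omega_{k_1}:k_i\in\Z,\ n\in\N\}$, exactly the hypothesis isolated in Remark \ref{oh}: this furnishes a single $\delta$ valid for \emph{every} composition $\Phi$ simultaneously, so that $d(u,y)<\delta$ forces $d(\Phi(u),\Phi(y))<\frac{2\epsilon}{3}$ for all $\Phi$ at once and the argument goes through. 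Equivalently, under this stronger equicontinuity the group generated by $\{\omega_n\}$ acts equicontinuously, so a dense orbital hull forces minimality and hence density of every orbital hull; I expect that securing this uniform control over arbitrary finite compositions, rather than the surrounding bookkeeping, is the crux of the proof.
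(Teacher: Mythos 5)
Your proposal follows exactly the route the paper intends: the paper's entire proof of this proposition is the sentence that it ``follows from discussions'' in the preceding remark, and that remark asserts the proof of the equicontinuous-transitivity proposition ``holds good when the element of the orbit is replaced by element of the orbital hull.'' Your group-theoretic observation (under commutativity the compositions $\omega_{k_n}\circ\cdots\circ\omega_{k_1}$ contain $\omega_0=\mathrm{id}$, satisfy $\omega_{-k}=\omega_k^{-1}$, and form a group, so that $\mathcal{O}_H(u)=\mathcal{O}_H(x)$ for every $u\in\mathcal{O}_H(x)$) is the correct analogue of the step ``denseness of $\mathcal{O}(x)$ forces $\overline{\mathcal{O}(u)}=X$'' in that earlier proof. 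Up to this point you and the paper coincide.

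The obstacle you isolate, however, is genuine, and the paper's proof-by-analogy glosses over it. In the orbit version the transfer from $u$ to $y$ applies a \emph{single} map $\omega_r$ to the $\delta$-close pair, which the stated notion of equicontinuity (uniformity over $\{\omega_n : n\in\Z\}$ only) controls; in the hull version the transfer applies an arbitrary composition $\Phi$, which it does not, and this is not repairable bookkeeping: with only single-map equicontinuity the statement as printed is false. Take $X=[0,1]$, $f_1(x)=x^{e}$, and for $n\geq 2$ let $f_n(x)=x^{e^{\sqrt{2}-1}}$ for $n$ even and $f_n(x)=x^{e^{1-\sqrt{2}}}$ for $n$ odd. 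These are commuting homeomorphisms, $\omega_n(x)=x^{e}$ or $x^{e^{\sqrt{2}}}$ for $n\geq 1$ and $\omega_{-n}=\omega_n^{-1}$, so $\{\omega_n : n\in\Z\}$ consists of five maps and the system is equicontinuous; yet $\mathcal{O}_H(\tfrac12)=\{(\tfrac12)^{e^{j+k\sqrt{2}}} : j,k\in\Z\}$ is dense in $[0,1]$ (since $\Z+\Z\sqrt{2}$ is dense in $\R$), while $\mathcal{O}_H(0)=\{0\}$. This is the same mechanism as the paper's own example with $f_{2n-1}(x)=x^{2}$, $f_{2n}(x)=\sqrt{x}$, which the authors themselves use to show that plain equicontinuity does not control closures of orbital hulls. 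So your diagnosis is exactly right: the result needs equicontinuity of the hull family $\{\omega_{k_n}\circ\cdots\circ\omega_{k_1} : k_i\in\Z,\ n\in\N\}$, the hypothesis isolated in Remark \ref{oh}, and under that strengthened hypothesis your three-epsilon argument closes the proof (alternatively, the conclusion then also follows from the paper's Corollary, since every $\overline{\mathcal{O}_H(y)}$ is minimal and one hull closure equals $X$). In short: your proof is correct where the paper's is not, and the extra hypothesis you insert is necessary, not merely convenient.
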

	
\begin{proof}
The proof follows from discussions in Remark \ref{tth}.
\end{proof}

\begin{example}
Let $S^1$ be the unit circle and let the sequence $(f_n)$ be defined as $f_n(\theta) = \left\{
	\begin{array}{lr}
		{\theta+ 2\pi \sum\limits_{i=1}^k \frac{1}{i}} &  :n=2k-1, \\
		{\theta- 2\pi \sum\limits_{i=1}^k \frac{1}{i}} & : n=2k. \\		
	\end{array}
	\right.$

Firstly, note that as $\sum\limits_{i=1}^{\infty} \frac{1}{i}=\infty$, any point traverses the circle infinitely often (and hence passes across the origin infinitely often). Also, at the end of $n=2k+1$ iterations, any point $\theta$ rotates effectively by an angle ($2\pi \sum\limits_{i=1}^{k+1} \frac{1}{i}) (mod 2\pi)$. As rotations after $n$ iterations are of magnitude less than $\frac{1}{n}$, orbit of any point is $\frac{1}{n}$-dense in $X$ (for any $n\in\N$) and hence dense in  $X$ and thus the system is transitive. Also, as $\omega_{2n}(x)=x$ for all $n\in\Z$, every point in $S^1$ is periodic (of period $2$). However, as the maps involved are isometries, the system is equicontinuous. Thus, transitive system with dense set of periodic points need not exhibit sensitive dependence on initial conditions.
\end{example}

\begin{result}\label{tth}
For any non-autonomous system $(X,\mathbb{F})$ generated by a commutative family of homeomorphisms, if $(X,\mathbb{F})$ is totally transitive with dense set of periodic points then $(X,\mathbb{F})$ is sensitive.
\end{result}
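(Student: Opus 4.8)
The plan is to transpose the classical Banks--Brooks--Cairns--Davis--Stacey theorem (transitivity plus dense periodic points forces sensitivity) to the nonautonomous setting, with total transitivity taking over the bookkeeping that the composition law $f^{a+b}=f^a\circ f^b$ performs in the autonomous proof. First I would fix the sensitivity constant. Since $X$ is compact and (to avoid a vacuous statement) has more than one point, the metric $d$ attains its maximum $D=diam(X)>0$ at some pair $a_1,a_2$; set $\delta=D/8$. The elementary observation driving everything is that for each $x\in X$ the inequality $8\delta=d(a_1,a_2)\le d(x,a_1)+d(x,a_2)$ forces $d(x,b)\ge 4\delta$ for at least one $b\in\{a_1,a_2\}$; fix such a \emph{far point} $b=b(x)$. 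I will show $(X,\mathbb{F})$ is sensitive with constant $2\delta$.

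Fix $x\in X$ and an arbitrary neighborhood $U$ of $x$; the goal is to produce $k$ with $diam(\omega_k(U))>2\delta$. Because periodic points are dense and $U\cap S(x,\delta)$ is a nonempty open set, I choose a periodic point $c\in U$ with $d(c,x)<\delta$ and let $n$ be its period, so that $\omega_{mn}(c)=c$ for all $m\in\Z$. Taking the far point $b=b(x)$, set $V=S(b,\delta)$, again nonempty and open.

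The key step is to drive some point of $U$ into $V$ at a time that is a multiple of $n$. This is precisely what total transitivity delivers: applying $n$-transitivity to the family $\mathbb{F}_n$, whose step-$m$ evolution operator coincides with $\omega_{mn}$, yields $m\in\Z$ and $y\in U$ with $\omega_{mn}(y)\in V$. Writing $T=mn$, periodicity gives $\omega_T(c)=c$, hence $d(\omega_T(c),x)<\delta$, while $d(\omega_T(y),b)<\delta$ by construction. The triangle inequality then yields $d(\omega_T(c),\omega_T(y))\ge d(x,b)-d(x,\omega_T(c))-d(\omega_T(y),b)>4\delta-\delta-\delta=2\delta$, and since $c,y\in U$ this gives $diam(\omega_T(U))>2\delta$. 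As $x$ and $U$ were arbitrary, sensitivity follows.

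The step I expect to be the main obstacle, and the reason the hypothesis must be \emph{total} transitivity rather than ordinary transitivity, is the synchronization of the visiting time with the period. In the autonomous argument one routes $y$ into the target in $k$ steps and then pads by $nj-k$ further steps to reach a multiple of the period, using $f^{nj}=f^{nj-k}\circ f^{k}$. Here $\omega_{a+b}\neq\omega_a\circ\omega_b$, so no such padding is available, and, since a periodic point of a nonautonomous system may have an infinite orbit, the classical device of two disjoint finite orbits is also unavailable. Total transitivity resolves both issues simultaneously: it produces a visiting time already lying in $n\Z$, so that $\omega_T(c)=c$ holds at the exact instant $y$ reaches $V$, and it lets me work with a single far \emph{point} $b$ rather than a far orbit. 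The commutativity hypothesis enters only to guarantee that the step-$m$ operator of $\mathbb{F}_n$ equals $\omega_{mn}$ for every $m\in\Z$ (including negative $m$, where the block inverses must be reordered), which is what permits the periodicity of $c$ and the transit of $y$ to be read off at the common time $T=mn$. I would finally note the degenerate case $|X|=1$, where sensitivity is understood vacuously and is therefore excluded.
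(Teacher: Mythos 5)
Your proof is correct and follows essentially the same route as the paper's: anchor a periodic point $c$ in $U$, use $n$-transitivity (with $n$ the period of $c$) to drive a second point of $U$ into a $\delta$-ball around a far point at a time $T\in n\Z$ so that $\omega_T(c)=c$, and apply the triangle inequality to force expansion of $U$. Your write-up merely sharpens the paper's loosely quantified constants (its ``$\frac{1}{n}$-neighborhood'' and ``sufficiently large $n$'') into the explicit uniform sensitivity constant $2\delta$ with $\delta = \mathrm{diam}(X)/8$, and makes explicit the synchronization and commutativity points the paper leaves implicit.
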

	
\begin{proof}
Let $(X,\mathbb{F})$ be totally transitive with dense set of periodic points, $x\in X$ and $U$ be any $\frac{1}{n}$-neighborhood of $X$. As set of periodic points is dense, $U$ contains a periodic point $p$ (say of order $r$). Also, if $diam(X)>k$ there exists $y\in X$ such that $d(x,y)>\frac{k}{2}$. As $(X,\mathbb{F})$ is $r$-transitive, there exists $u\in U,~~n_y\in\Z$ such that $d(\omega_{rn_y}(u),y)<\frac{1}{n}$. Also as $\omega_{rn_y}(p)=p$, for a sufficiently large $n$, we have $d(\omega_{rn_y}(u),\omega_{rn_y}(p))>\frac{k}{4}$. Consequently, neighborhood of any point in $X$ expands to diameter greater than $\frac{k}{4}$ and hence  $(X,\mathbb{F})$ is sensitive.
\end{proof}

\begin{result}
For any minimal non-autonomous system $(X,\mathbb{F})$ generated by a commutative family of homeomorphisms, $(X,\mathbb{F})$ is either equicontinuous or exhibits a dense set of sensitive points.
\end{result}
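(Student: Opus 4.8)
The plan is to organise the dichotomy around the set $E$ of \emph{equicontinuity points} of the flow, where $x\in E$ means that for every $\epsilon>0$ there is a $\delta>0$ with $d(x,y)<\delta\Rightarrow d(\omega_n(x),\omega_n(y))<\epsilon$ for all $n\in\Z$. My first step is the elementary observation that $X\setminus E$ is \emph{exactly} the set of sensitive points. Indeed, if $x\notin E$ then some $\epsilon_0>0$ witnesses that every ball $B(x,\delta)$ admits an $n$ with $\mathrm{diam}(\omega_n(B(x,\delta)))\ge\epsilon_0$, so $x$ is sensitive with constant $\epsilon_0/2$; conversely, a blow-up of $\mathrm{diam}(\omega_k(U))$ past $\delta_x$ forces one of the two near-extremal points to be moved more than $\delta_x/2$ away from $\omega_k(x)$, so $x\notin E$. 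Since $\overline{X\setminus E}=X\setminus\mathrm{int}(E)$, this reduces the whole statement to a single implication: if $\mathrm{int}(E)\neq\emptyset$ then $E=X$. Granting this, either $\mathrm{int}(E)=\emptyset$, in which case the sensitive points $X\setminus E$ are dense, or $\mathrm{int}(E)\neq\emptyset$, in which case $E=X$ and (as I verify below) the system is equicontinuous.

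The key structural step, and the only place commutativity enters, is that $E$ is invariant under the entire orbital hull. I would first check $\omega_k(E)=E$ for every $k\in\Z$: for $x\in E$ and $y$ near $\omega_k(x)$, writing $z=\omega_{-k}(y)$ (which is near $x$ by uniform continuity of the homeomorphism $\omega_{-k}$ on the compact space $X$) and using $\omega_n\omega_k=\omega_k\omega_n$, one gets $\omega_n(\omega_k x)=\omega_k(\omega_n x)$ and $\omega_n(y)=\omega_k(\omega_n z)$, so the equicontinuity estimate for $x$ at $z$ is transported to $\omega_k(x)$ by the uniform continuity of $\omega_k$. Hence $\omega_k(x)\in E$; applying this to both $k$ and $-k$ yields $\omega_k(E)=E$, and since every orbital-hull map $h=\omega_{k_n}\circ\cdots\circ\omega_{k_1}$ is a composition of such $\omega_{k_i}$, we obtain $h(E)=E$ and in particular $h^{-1}(E)=E$.

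With full invariance in hand, both cases close quickly. Suppose $W:=\mathrm{int}(E)\neq\emptyset$ and fix an arbitrary $z\in X$. Since $(X,\mathbb{F})$ is minimal, $\overline{\mathcal{O}_H(z)}=X$ meets the nonempty open set $W$, so some orbital-hull map $h$ satisfies $h(z)\in W\subseteq E$, whence $z\in h^{-1}(E)=E$. As $z$ was arbitrary, $E=X$. A standard compactness argument then promotes this pointwise equicontinuity to uniform equicontinuity: were it to fail there would be $x_m\to x^\ast$, $y_m\to x^\ast$ and indices $n_m$ with $d(\omega_{n_m}(x_m),\omega_{n_m}(y_m))\ge\epsilon_0$, directly contradicting $x^\ast\in E$. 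Thus $(X,\mathbb{F})$ is equicontinuous, while in the complementary case $\mathrm{int}(E)=\emptyset$ the set $X\setminus E$ of sensitive points is dense, completing the dichotomy.

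I expect the genuine obstacle to be the passage from a single \emph{open} set of equicontinuity points to global uniform equicontinuity: the single-point version $E\neq\emptyset$ is not enough, because the minimality argument pulls a point back into $E$ only after it has been mapped \emph{into an open} piece $W$ of $E$, which is precisely why the dividing line is $\mathrm{int}(E)$ rather than $E$ itself. The two ingredients making this work are commutativity of $\mathbb{F}$ (so that $E$ is fully invariant and hitting $W$ forces membership in $E$) and minimality in its orbital-hull form (so that every orbital hull meets $W$). I anticipate the most error-prone part to be the bookkeeping of which family acts in each step—the maps $\{\omega_n\}_{n\in\Z}$ governing equicontinuity and sensitivity versus the orbital-hull semigroup governing minimality—and the verification that the commutation identities correctly bridge the two.
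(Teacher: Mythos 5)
Your proof is correct, but it runs the argument in the opposite direction from the paper's, in the style of the Auslander--Yorke dichotomy. The paper works forward from sensitivity: assuming the system is not equicontinuous, it takes a sensitive point $x$ (with constant $\eta$) and, using commutativity together with the fact that each $\omega_k$ is a homeomorphism (so $d(a,b)\geq\eta$ forces $d(\omega_k(a),\omega_k(b))\geq\eta'$), shows that sensitivity at $x$ transports to $\omega_k(x)$ and hence to every point of $\mathcal{O}_H(x)$; minimality (density of the orbital hull) then yields a dense set of sensitive points. You instead organize everything around the set $E$ of equicontinuity points: you identify $X\setminus E$ with the sensitive set, prove $h(E)=E$ for every hull map $h$ (this is the same commutativity-transport computation as the paper's, applied to $E$ rather than to its complement), and use minimality to pull an arbitrary $z\in X$ back into $E$ once $\mathrm{int}(E)\neq\emptyset$, finishing with a compactness upgrade from pointwise to uniform equicontinuity. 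Your route buys something genuine: the paper's opening move --- ``if not equicontinuous, let $x$ be a point of sensitivity'' --- silently assumes that failure of uniform equicontinuity produces at least one sensitive point, which is precisely the compactness argument you make explicit (in contrapositive form) at the end; your dichotomy over $\mathrm{int}(E)=\emptyset$ versus $\mathrm{int}(E)\neq\emptyset$ is exhaustive by construction and so closes that gap, at the modest price of extra bookkeeping with $\omega_{-k}$ (note that $\omega_{-k}=\omega_k^{-1}$ itself relies on commutativity under the paper's conventions, which your hypotheses correctly supply).
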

	
\begin{proof}
Let $(X,\mathbb{F})$ be minimal system generated by a commutative family of homeomorphisms. If $(X,\mathbb{F})$ is equicontinuous then the result holds trivially. If not, let $x$ be a point of sensitivity (with sensitivity constant $\eta$) and let $k\in \Z$ be fixed. Let $\epsilon>0$ be given and let $U=S(\omega_k(x),\epsilon)$.  As $\omega_k$ is continuous, there exists $\delta>0$ such that $\omega_k(S(x,\delta))\subset S(\omega_k(x),\epsilon)$. As each $\omega_k$ is a homeomorphism, there exists $\eta'>0$ such that $d(a,b)\geq\eta$ implies $d(\omega_k(a),\omega_k(b))\geq\eta'$. Further, as $x$ is a point of sensitivity, there exists $y\in S(x,\delta)$ and $r\in\Z$ such that $d(\omega_r(x),\omega_r(y))\geq\eta$. Thus, we have  $d(\omega_k(\omega_r(x)),\omega_k(\omega_r(y)))\geq\eta'$ and hence $d(\omega_r(\omega_k(x)),\omega_r(\omega_k(y)))>\eta'$ (as $\mathbb{F}$ is commutative). As $\omega_k(y)\in U$, $\omega_k(x)$ is a point of sensitivity and hence sensitivity of a point ensures sensitivity at each element of the orbit. As the arguments can repeated for elements of the orbit, sensitivity at a point $x$ ensures sensitivity at elements of the orbital hull. Finally, as $X$ is minimal, orbital hull of $x$ is dense in $X$ and the proof is complete.
\end{proof}

\section*{Acknowledgement}
The first author thanks MHRD for financial support. The second author thanks National Board for Higher Mathematics (NBHM) for financial support.

\bibliography{xbib}

\begin{thebibliography}{99}
\bibitem{kol}
{\bf  Akin E., Kolyada S.}, Li-Yorke Sensitivity, {\it Nonlinearity,} 16 (2003), 1421-1433.

\bibitem{of}
{\bf Balibrea F., Oprocha P.}, Weak mixing and chaos in nonautonomous discrete systems,{\it Applied Mathematics Letters,} 25(2012), 1135-1141.

\bibitem{beer}
{\bf Beer, R.D.}, Dynamical approaches to cognitive science {\it Trends in cognitive sciences}, Vol. 4, No. 3, March 2000.

\bibitem{bc}
{\bf Block L., Coppel W.}, Dynamics in one dimension, {\it
Springer-Verlag, Berlin Hiedelberg} (1992).

\bibitem{bs}
{\bf Brin Michael, Stuck Garrett}, Introduction to dynamical
systems, {\it Cambridge University Press} (2002).

\bibitem{de}
{\bf Devaney Robert L.}, Introduction to chaotic dynamical systems,
{\it Addison Wesley} (1986).

\bibitem{jd}
{\bf Dvorakova J.}, Chaos in  nonautonomous discrete dynamical systems, {\it Communications in Nonlinear Science and Numerical
Simulation} 17 (2012), 4649-4652.

\bibitem{Hamill}
{\bf Hamill J., Emmerik R., Heiderscheit B., Li L.}, A dynamical systems approach to lower extremity running injuries
{\it Clinical biomechanics}, 14(1999), 297-308.

\bibitem{sk1}
{\bf  Kolyada S., Snoha L.}, Topological entropy of Nonautonomous Dynamical Systems, {\it Random and Computational Dynamics,} 4(2\&3)
(1996), 205-233.

\bibitem{sk2}
{\bf  Kolyada S., Snoha L., Trofimchuk S.}, On minimality of Non-autonomous Dynamical Systems, {\it Nonlinear Oscillations},
vol. 7, Issue 1 (2004),83-89.

\bibitem{kohmoto}
{\bf Oono Y., Kohmoto M.}, Discrete model of chemical turbulence, {\it Physical review letters} vol. 55 no.27 2927-2931.

\bibitem{pm}
{\bf  Sharma P., Raghav M.}, Dynamics Of Non-Autonomous Discrete Dynamical Systems, {\it Topology Proceedings} vol. 52 (2018), 45-59.
\end{thebibliography}

\end{document}